\documentclass[a4paper]{article}
\usepackage{amsmath, amsthm, amssymb, pdfsync, psfrag, verbatim,color}
\usepackage{hyperref}
\usepackage {mathrsfs, graphicx, newcent,wrapfig,multirow}
\usepackage{breakcites}
\usepackage[round]{natbib}
\usepackage[affil-it]{authblk}
\usepackage{bbm}
\textheight 222 true mm \topmargin -7 true mm
\textwidth  178 true mm
\oddsidemargin -7 true mm
\evensidemargin 50 true mm


\newcommand{\bbD}{\mathbb D}
\newcommand{\bbE}{\mathbb E}

\newcommand{\bbN}{\mathbb N}

\newcommand{\bbP}{\mathbb P}

\newcommand{\bbR}{\mathbb R}


\newcommand{\scB}{\mathcal B}
\newcommand{\scC}{\mathcal C}
\newcommand{\scD}{\mathcal D}

\newcommand{\scF}{\mathcal F}
\newcommand{\scG}{\mathcal G}

\newcommand{\scL}{\mathcal L}

\newcommand{\veps}{\varepsilon}

\newcommand{\ang}[1]{\ensuremath{ \left \langle #1 \right \rangle }}

\newcommand{\abs}[1]{\ensuremath{\left| #1 \right|}}

\DeclareMathOperator*{\esssup}{ess\,sup}

\newcommand{\indicator}[1]{\ensuremath{\mathbf{1}_{\crl{#1}}}}

\newcommand{\crl}[1]{\ensuremath{ \left\{ #1 \right\} }}
\newcommand{\edg}[1]{\ensuremath{ \left[ #1 \right] }}
\newcommand{\brak}[1]{\ensuremath{\left( #1 \right)}}

\newtheorem{theorem}{Theorem}[section]
\newtheorem{definition}[theorem]{Definition}

\newtheorem{lemma}[theorem]{Lemma}
\newtheorem{remark}[theorem]{Remark}
\newtheorem{example}[theorem]{Example}
\newtheorem{examples}[theorem]{Examples}
\newtheorem{foo}[theorem]{Remarks}
\newtheorem{assumption}[theorem]{Assumption}
\newenvironment{Example}{\begin{example}\rm}{\end{example}}

\newenvironment{Remark}{\begin{remark}\rm}{\end{remark}}

\numberwithin{equation}{section}
\hyphenation{lin-e-ar-i-ty}
\hyphenation{non-de-crea-sing}

\title{No arbitrage assumption implies the differentiability of derivative pricing function}
\author{Kihun Nam}
\author{Yunxi Xu}
\affil{Department of Mathematics, Monash University}
\date{\today}

\begin{document}	
\maketitle
 	\begin{abstract}
The no arbitrage assumption implies that the price of an asset must be a semimartingale. In this article, we characterize the class of functions that map It\^o processes to continuous semimartingales, as well as those that map continuous Markov semimartingales to continuous Markov semimartingales. This class of functions generalizes the conventional Sobolev space by adopting a weaker notion of derivatives. In particular, the functions are required to be weakly differentiable with respect to the input process. From a financial perspective, our results show that the no arbitrage assumption implies the differentiability of any derivative price with respect to the underlying asset, provided that the underlying noise is a continuous Markov semimartingale. Moreover, we demonstrate that the Malliavin differentiability of the input process (the underlying) leads to the Malliavin differentiability of the output process (the derivative price).
		\\[2mm]
		{\bf Key words:} No arbitrage, semimartingale, Markov process
	\end{abstract}
\section{Introduction}
We consider a financial market on the filtered probability space where the filtration is generated and augmented by a Brownian motion. Assume there is ``No Free Lunch with Vanishing Risk'' (NFLVR). \cite{delbaen1994general, delbaen1998fundamental} showed NFLVR is equivalent to the existence of an equivalent sigma martingale measure. Then, the Girsanov theorem and the Bichteler-Dellacherie theorem tells us every price process should be a semimartingale.

On the other hand, consider a process $Y_t=f(t,X_t)$ where $f:\bbR\times\bbR\to\bbR$ and $X$ is a right Markov process. When $f$ is a time-homogeneous, \cite{cinlar1980semimartingales} proved $Y$ is a semimartingale if and only if $f$ is a difference between excessive functions. In particular, when the Markov process is a Brownian motion, then they showed the resulting process is a semimartingale if and only if the function is the difference of two convex functions. The result is generalised to the case where the function is time-inhomogeneous by \cite{chitashvili1997functions}, and they chracterised the space for $f$. When the $X$ is an It\^o process, the same authors obtained the necessary and sufficient condition for the time-inhomogeneous $f$ to make $Y$ an It\^o process in \cite{chitashvili1996generalized}. On the other hand, the It\^o formula tells us that, if $X$ is a semimartingale and $f$ is in $\scC^{1,2}$, then $Y_t=f(t,X_t)$ is a semimartingale. When $X$ is a solution of an SDE driven by a continuous semimartingale, we characterise the class of $f$ that makes $f(t,X_t)$ a continuous semimartingale.

These two streams of idea naturally lead to our main claim: when $X$ is a continuous Markov semimartingale, the no-arbitrage principle (NFLVR) on $Y_t = f(t, X_t)$ implies a form of regularity on the function $f$ in the transformation. In particular, we show that $f$ must possess a certain degree of differentiability. For example, a direct consequence is that $f(t,x)$ should be weakly differentiable with respect to $x$, which ensures the existence of a well-defined \textit{delta} for European-style derivatives written on the asset $X$. This aligns with our intuition since delta usually represents our hedging of the derivative. On the contrary, it is possible that \textit{theta} and 
\textit{gamma} may not be well-defined contrasting to the conventional Sobolev-differentiability assumptions. Another implication is that the differentiability of $X$ with respect to the underlying noise transfers to the differentiability of $Y$ via the chain rule: e.g. $Y$ should be Malliavin differentiable if $X$ is Malliavin differentiable. These insights not only extend the classical results of \cite{cinlar1980semimartingales} and \cite{chitashvili1996generalized,chitashvili1997functions} but also bridge the gap between stochastic analysis and financial modeling by characterizing the minimal regularity needed for semimartingale-based asset pricing.

The space of semimartingale functions is known to be strictly weaker than the classical Sobolev space. Due to its intrinsic connection with stochastic calculus and broad applicability in financial modeling, particularly in the analysis of asset price dynamics, it becomes essential to investigate the structure and properties of the function space that transforms a Markov process into a semimartingale. Understanding this transformation not only deepens our theoretical grasp of stochastic processes but also provides valuable tools for the development of robust models in quantitative finance, optimal control, and related fields. 


\section{Functions transforming an It\^{o} process into semimartingale}
Let $\brak{\Omega,\scF,(\scF_t)_{t\geq 0},\bbP}$ be a filtered probability space with the filtration $(\scF_t)_{t\geq 0}$ satisfying the usual conditions. We use $|\cdot|$ for the Euclidean norm on Euclidean spaces. We denote $\bbD^{1,2}$ as the Malliavin Sobolev space of Malliavin differentiable random variables with square integrable derivatives. We consider an It\^{o} process $X$ such that it satisfies a SDE of the following form
\begin{align}
    \label{ito sde}
    dX_t=b(t,X_t)dt+\sigma(t,X_t)dW_t;\quad X_0=x\in\bbR^m
\end{align}
where $W$ is an adapted standard $\bbR^m$-valued Brownian motion and $b,\sigma$ are functions from $\bbR^+\times\bbR^m$ with values in $\bbR^m$ and $\bbR^{m\times m}$, respectively.
We assume the following:
\begin{assumption}
\label{sde assumption}
    \begin{itemize}
        \item[(A1)] The coefficient $b$ is measurable and bounded.
        \item[(A2)] The coefficient $\sigma\sigma^\intercal$ is bounded and continuous. Moreover, there exists a constant $\veps>0$ such that
			\[
			\veps|x'|^2\leq (x')^\intercal(\sigma\sigma^\intercal)(t,x) x'\leq \veps^{-1}|x'|^2\\
			\]
			hold for all $t\in[0,T], x,x'\in\bbR^m$.
\end{itemize}
\end{assumption}

Notice that, under Assumption \ref{sde assumption}, it is well-known there exits a unique weak solution of SDE \eqref{ito sde} and it satisfies the strong Markov property.

Next, let us remind the operator $L$ and the corresponding domain $V^L_\mu(loc)$ and $\hat V^L_\mu(loc)$, which are introduced in \cite{chitashvili1996generalized} and \cite{chitashvili1997functions}. Let $\mu(ds,dy):=p(s,y)dsdy$, where $p$ is the transition density corresponding to SDE $dZ_t=\sigma(t,Z_t)dW_t$ with $Z_0=x\in\bbR^m$.
For a function $f\in C^{1,2}$, the $L$ operater is defined by
\begin{align*}
    \brak{Lf}(t,x)=f_t(t,x)+\frac{1}{2} \sum_{i,j=1}^m(\sigma\sigma^{\intercal})_{ij}(t,x)f_{x_ix_j}(t,x),
\end{align*}
where $f_t,f_{x_ix_j}$ are partial derivatives of the function $f$. 
\begin{definition}
\label{def V}

		We say $f$ belongs to $V^{L}_\mu(loc)$, if there exists a sequence of functions $(f_n)_{n\geq 1}\subset C^{1,2}$, and a measurable function $L f$ which satisfy the following conditions:
		\begin{itemize}
		\item[(i)] For every compact set $D\in \bbR^+\times\bbR^m$,
		\begin{align*}
						\sup_{(s,x)\in D}|f^n(s,x)-f(s,x)|\xrightarrow{n\to\infty}0.
		        \end{align*}
		\item[(ii)] There exists a sequence of functions $(h_k(t,k))_{k\geq 1}$ with the following properties: 
		\begin{itemize}
		\item $h_k(0,x)=1$ for each $x\in\bbR^m$, $h_k(t,x)\leq h_{k+1}(t,x)$, $h_k(t,x)\rightarrow 1$ $\mu$-a.e., and $\tau_k:=\inf\{t:h_k(t,X_t)\leq\lambda\}$, for some $0<\lambda<1$, are stopping times with $\tau_k\rightarrow\infty$ $\bbP$-a.s.
		\item For each $k\geq 1$,
							\begin{align*}
								\iint|L f^n(s,x)-L f(s,x)|h_k(s,x)\mu(ds,dx)\xrightarrow{n\to\infty}0.
							\end{align*}
		\end{itemize} 
		\end{itemize}
Then, we define the $L$-derivative of $f$ by $L f$. 	Moreover, if $f\in V^{\scL}_\mu(loc)$, then there exists $f_x(t,x)$ such that
		\begin{align}
        \label{fx convergence}
			\iint|f^n_x(s,x)-f_x(s,x)|^2 h_k(s,x)\mu(ds,dx)\xrightarrow{n\to\infty}0.
		\end{align}
		We define $f_x$ to be the generalized gradient of $f$.
\end{definition}
\begin{Remark}
\label{equivalent def}
    Notice that the above definition is equivalent to the following:
    A function $f$ belongs to $V^{L}_\mu(loc)$, if there exists a sequence of functions $(f_n)_{n\geq 1}\subset C^{1,2}$, a sequence of bounded measurable domains $D_1\subset D_2\subset\cdots$ with $(0,x)\in D_1$ and $\cup_{n\in\bbN} D_n=\bbR^+\times\bbR^m$, and a measurable locally $\mu$-integrable function $Lf$ such that for $u_k:=\inf\{t:(t,X_t)\notin D_k\}$ and each $k\geq 1$,
    \begin{align*}
      &\sup_{s\leq u_k}\abs{f^n(s,X_s)-f(s,X_s)}\xrightarrow{n\to\infty}\infty\\
     \iint_{D_k}&|L f^n(s,x)-L f(s,x)|\mu(ds,dx)\xrightarrow{n\to\infty}0.
    \end{align*}
This is a direct consequence of Lemma $5$ and Lemma $6$ in \cite{chitashvili1996generalized} plus Proposition $4$ in \cite{chitashvili1997functions}.
\end{Remark}
\begin{definition}
    \label{hat V}
    We say a function $f$ belongs to $\hat{V}^L_\mu(loc)$ if there exists a sequence of functions $(f^n)_n$ from $C^{1,2}$ and a $\sigma$-finite signed measure $\nu^L_f$ on $(\bbR^+\times\bbR^m,\scB(\bbR^+\times\bbR^m))$ which satisfy the following conditions:
    \begin{itemize}
    \item[(i)]  The condition (i) from Definition \ref{def V} holds.
    \item[(ii)] There exists a sequence of functions $(h_k(t,k))_{k\geq 1}$ with the following properties: 
    		\begin{itemize}
    		\item $h_k(0,x)=1$ for each $x\in\bbR^m$, $h_k(t,x)\leq h_{k+1}(t,x)$, $h_k(t,x)\rightarrow 1$ $\mu$-a.e., and $\tau_k:=\inf\{t:h_k(t,X_t)\leq\lambda\}$, for some $0<\lambda<1$, are stopping times with $\tau_k\rightarrow\infty$ $\bbP$-a.s.
    		\item For each $k\geq 1$,
    							\begin{align*}
    							            \iint \psi(s,x)\brak{Lf^n}(s,x)h_k(s,x)\mu(ds,dx)\xrightarrow{n\to\infty}\iint \psi(s,x)h_k(s,x)\nu^L_f(ds,dx)
    							        \end{align*}
    							        for every bounded continuous function $\psi$.
    		\end{itemize} 
    \end{itemize}
\end{definition}
Now we are ready to state our first main theorem.
\begin{theorem}
\label{decomposition theorem}
  Suppose $X$ satisfies Assumption \ref{sde assumption}.  The process $(f(t,X_t))_{t\geq 0}$ is an It\^{o} process if and only if $f\in V^{L}_\mu(loc)$ and it admits the decomposition
\begin{align*}
    f(t,X_t)=f(0,X_0)+\int_0^t f_x(s,X_s)dX_s+\int_0^tLf(s,X_s)ds.
\end{align*}
On the other hand, the process $(f(t,X_t))_{t\geq 0}$ is a semimartingale if and only if $f\in \hat{V}^L_\mu(loc)$, and it has decomposition
\begin{align}
\label{hat V representation}
    f(t,X_t)=f(0,X_0)+\int_0^t f_x(s,X_s)dX_s+ A_t^f
\end{align}
where $A^f$ is uniquely determined by the relation
\begin{align}
    \bbE\int_0^\infty\psi(s,X_s)dA_s^f=\iint\psi(s,x)\nu^L_f(ds,dx)
\end{align}
for each bounded continuous function $\psi$.
\end{theorem}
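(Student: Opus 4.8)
The plan is to prove both equivalences by the same two-step scheme: establish the decomposition for smooth approximants through the classical \Ito\ formula, and then pass to the limit term by term, using the defining convergences of $V^L_\mu(loc)$ (resp. $\hat V^L_\mu(loc)$). The device that makes the driftless reference measure $\mu$ the correct object is a Girsanov change of measure removing $b$. Since $b$ is bounded and $\sigma$ is invertible by the ellipticity in (A2), the process $\sigma^{-1}b$ is bounded, Novikov's condition holds, and
\[\left.\frac{d\bbQ}{d\bbP}\right|_{\scF_t}=\scE\left(-\int_0^t (\sigma^{-1}b)(s,X_s)\,dW_s\right)_t\]
defines a measure $\bbQ\sim\bbP$ on each finite horizon under which $X$ is the driftless \Ito\ process $dX_t=\sigma(t,X_t)\,d\tilde W_t$. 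Under $\bbQ$ the law of $X$ coincides with that of $Z$, so $\bbE_\bbQ\int_0^\infty g(s,X_s)\,ds=\iint g\,d\mu$, and the expected occupation measure of $X$ under $\bbP$ is mutually absolutely continuous with $\mu$ with locally two-sided bounded density (Aronson's Gaussian bounds apply to both kernels). Hence the $\mu$-convergences in Definitions \ref{def V}--\ref{hat V} are equivalent to the corresponding convergences along the $\bbP$-paths of $X$ after the localization by $\tau_k$ (or the compact exhaustion $D_k$ of Remark \ref{equivalent def}). The key structural fact is that $f^n(t,X_t)=f^n(0,X_0)+\int_0^t f^n_x\,dX_s+\int_0^t (Lf^n)(s,X_s)\,ds$, which is just \Ito's formula for $f^n\in C^{1,2}$ with the driftless operator $L$, is measure invariant: the stochastic integral $\int f^n_x\,dX$ against the semimartingale $X$ absorbs the drift $\int f^n_x\,b\,ds$ and is itself invariant under the equivalent change of measure.

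For the sufficiency direction I would take the approximating sequence $(f^n)\subset C^{1,2}$ from the definition and pass to the limit in the three terms of this identity, localized on $[0,\tau_k]$. Condition (i) yields uniform convergence of $f^n(t,X_t)\to f(t,X_t)$ and of the initial values; the $L^1$-convergence $\iint|L f^n-Lf|h_k\,d\mu\to0$ together with the measure equivalence gives $\int_0^{\cdot}(Lf^n)(s,X_s)\,ds\to\int_0^{\cdot}(Lf)(s,X_s)\,ds$ uniformly in probability; and the $L^2(\mu)$-convergence \eqref{fx convergence} of $f^n_x\to f_x$, combined with the \Ito\ isometry and the ellipticity bound on $\sigma\sigma^\intercal$, gives $\int_0^{\cdot} f^n_x\,dX_s\to\int_0^{\cdot} f_x\,dX_s$. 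Letting $k\to\infty$ removes the localization and produces the stated decomposition, exhibiting $f(t,X_t)$ as an \Ito\ process. The semimartingale half is identical except that condition (ii) of Definition \ref{hat V} provides only weak convergence of $(Lf^n)\,\mu$ to $\nu^L_f$; the limit $A^f$ of $\int_0^{\cdot}(Lf^n)(s,X_s)\,ds$ is then adapted and of finite variation, and its associated measure is exactly $\nu^L_f$, i.e. $\bbE_\bbQ\int_0^\infty\psi(s,X_s)\,dA^f_s=\iint\psi\,d\nu^L_f$ for bounded continuous $\psi$. Here the $\bbQ$-expectation (under which $X$ has occupation measure $\mu$) is the natural one, and the corresponding $\bbP$-statement is recorded through the Radon--Nikodym density.

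For the necessity direction, which I expect to be the main obstacle, I would argue that if $f(t,X_t)$ is a continuous semimartingale then, by invariance of the semimartingale property under an equivalent change of measure, it is a $\bbQ$-semimartingale, and one must reconstruct $f_x$ and $\nu^L_f$ from its decomposition. The reconstruction proceeds through spatial--temporal mollification $f^n=f*\rho_{1/n}$, which satisfies (i) automatically; the real content is to show that $Lf^n$ is Cauchy in the localized $\mu$-sense and that $f^n_x$ converges as in \eqref{fx convergence}. The derivative is pinned down by the martingale part $M$ of $f(t,X_t)$ via the bracket relation $d\langle M,X^i\rangle_t=\sum_l f_{x_l}(\sigma\sigma^\intercal)_{li}(t,X_t)\,dt$, which, inverted using ellipticity of $\sigma\sigma^\intercal$ and the existence of a jointly measurable occupation density of $X$ (guaranteed by (A2) and Aronson bounds), identifies $f_x$ as a genuine function and yields its $L^2(\mu)$-approximation; the finite-variation part then defines $\nu^L_f$ through its expected occupation under $\bbQ$.

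The hard part is precisely this disintegration --- converting the pathwise bracket and finite-variation identities into statements about functions on $\bbR^+\times\bbR^m$ that are integrable against $\mu$ --- together with the convergence of the mollified $L$-derivatives. For this I would lean on Lemmas 5--6 of \cite{chitashvili1996generalized} and Proposition 4 of \cite{chitashvili1997functions}, already invoked in Remark \ref{equivalent def}, which supply exactly these approximation and disintegration statements for the driftless process. In this way the entire necessity argument reduces to the Chitashvili--Mania theory applied to $X$ under $\bbQ$, and the transfer back to $\bbP$ only adds the absolutely continuous term $\int_0^{\cdot} f_x\,b\,ds$ to the finite-variation part, preserving both the \Ito-process and the semimartingale structure.
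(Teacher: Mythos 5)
Your overall skeleton (Girsanov to reduce to $b=0$, It\^o's formula for the smooth approximants, localization by $\tau_k$, term-by-term passage to the limit) matches the paper's, and your sufficiency direction is essentially the argument given there, modulo one point you gloss over: weak convergence of $(Lf^n)\,\mu$ to $\nu^L_f$ does not by itself give convergence of the processes $\int_0^\cdot (Lf^n)(s,X_s)\,ds$; the paper first extracts the uniform bound $\sup_n\iint h_k\,\abs{Lf^n}\,d\mu<\infty$ from the weak convergence, then invokes a compactness result for semimartingale sequences (Proposition 2 of Chitashvili--Mania) to conclude that the limit of $f^n(t\wedge\tau_k,X_{t\wedge\tau_k})$ is a semimartingale and that the martingale and finite-variation parts converge along a subsequence. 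You should make that step explicit rather than asserting that "the limit $A^f$ is adapted and of finite variation."

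The genuine gap is in the necessity direction, and it is exactly where you predicted the difficulty would be. You propose to approximate by spatial--temporal mollification $f^n=f*\rho_{1/n}$ and then "show that $Lf^n$ is Cauchy in the localized $\mu$-sense." There is no mechanism for this: for $f$ merely continuous, $L(f*\rho_{1/n})$ is obtained by dumping the derivatives onto the mollifier, and nothing ties its size to the hypothesis that $f(t,X_t)$ is a semimartingale, because convolution does not commute with the variable-coefficient operator $L$ and does not see the process at all. The paper's (and Chitashvili--Mania's) essential device is to approximate instead by the transition-semigroup average
\begin{align*}
\hat f^n(s,x)=n\int_s^{s+1/n}\int_{\bbR^m} f(u,y)\,P(s,x,u,dy)\,du,
\end{align*}
for which one computes $\brak{L\hat f^n}(s,x)=n\,\bbE\edg{f(s+1/n,X_{s+1/n})-f(s,X_s)\mid X_s=x}=n\,\bbE\edg{A_{s+1/n}-A_s\mid\scF^X_s}$ on the event $\crl{s<\tau_k}$; this is precisely where the semimartingale decomposition of $f(t,X_t)$ enters, and it yields the uniform total-variation bound $\bbE\brak{\mathrm{Var}\,\tilde A^n}_{\tau_k}\le 2C+\bbE\brak{\mathrm{Var}\,A}_{\tau_k}$ that drives the whole argument (identification of $f_x$ as the $L^2(\mu)$-limit of $\hat f^n_x$ via the ellipticity bound, and construction of $\nu^L_f$ as the expected occupation measure of $A$). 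Lemmas 5--6 of Chitashvili--Mania, which you lean on, handle the localization and disintegration but not the control of $L$ applied to a generic mollification, so your reduction does not close. Your alternative identification of $f_x$ through $d\langle M,X^i\rangle$ also presupposes a Markovian disintegration of that bracket into a deterministic function of $(t,X_t)$, which again is supplied by the semigroup construction rather than assumed. Finally, you omit two pieces the paper must and does supply: the uniqueness of $A^f$ from the relation $\bbE\int_0^\infty\psi(s,X_s)\,dA^f_s=\iint\psi\,d\nu^L_f$ (proved via c\`adl\`ag modifications of optional projections), and the removal of the boundedness assumption on $f$ by truncation.
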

\begin{proof}
    The first claim is proved in \cite{chitashvili1996generalized}. Since the reference is heavily used in this proof, we will use CM to refer to this paper. Without loss of generality, let us prove the second claim for the case $b=0$. The general case $b\neq 0$ follows immediately from Girsanov's Theorem. The idea is essentially the same to the proof of Theorem 1 of \cite{chitashvili1997functions} with modification for $X$ being an It\^o process instead of a Brownian motion.\\
\textbf{Step 1.} ($f\in\hat{V}^{\scL}_\mu(loc)\implies f(\cdot,X.)$: semimartingale)  Let $(h_k(s,x))_{k\geq 1}$ be a localizing sequence of functions from Definition \ref{hat V}, and $(\tau_k)_{k\geq 1}$ be corresponding stopping times with $\tau_k=\inf \{t:h_k(t,X_t)\leq\lambda<1\}$. Since $\tau_k\rightarrow\infty$, it is enough to show $(f(t\land\tau_k,X_{t\land\tau_k}))_{t\geq 0}$ is a semimartingale for every $k\geq 1$. For notation simplicity, we omit $k$ in the following proof. 
    
    Let $(f^n)_n$ be the approximating sequence of $C^{1,2}$ functions from Definition \ref{hat V}. Thus $f^n(t\land\tau,X_{t\land\tau})$ is a semimartingale with decomposition
    \begin{align*}
        f^n(t\land\tau,X_{t\land\tau})=f^n(0,X_0)+M^n_t+A^n_t,
    \end{align*}
where \begin{align*}
    M^n_t:=\int_0^{t\land \tau}f^n_x(s,X_s)\sigma(s,X_s)dW_s\qquad \text{and}\qquad A^n_t:=\int_0^{t\land \tau} \brak{Lf^n}(s,X_s)ds.
\end{align*}
It follows from (i) that for each $t\geq 0$,
\begin{align*}
    \sup_{s\leq t}\abs{f^n(s\land\tau,X_{s\land\tau})-f(s\land\tau,X_{s\land\tau})}\xrightarrow{n\rightarrow\infty}0\  a.s.
\end{align*}
According to Proposition 3 in \cite{chitashvili1997functions}, it is sufficient to show that, for each $t>0$, 
\begin{align*}
    \lim_N\sup_n\bbP\brak{\brak{Var A^n}_{t\land\tau}>N}=0.
\end{align*}
where $Var A^n$ is the total variation of $A^n$.
Notice that
\begin{align*}
    \bbE\brak{Var A^n}_{t\land\tau}&=\bbE\int_0^{t\land\tau}\abs{\brak{Lf^n}(s,X_s)}ds\leq \frac1\lambda \bbE\int_0^{t\land\tau}h_k(s,X_s)\abs{\brak{Lf^n}(s,X_s)}ds\\
    &\leq \frac1\lambda\iint h_k(s,x)\abs{\brak{Lf^n}(s,x)}\mu(ds,dx).
\end{align*}
Moreover, we have 
\begin{align*}
        \iint \psi(s,x)\brak{Lf^n}(s,x)h_k(s,x)\mu(ds,dx)\xrightarrow{n\to\infty}\iint \psi(s,x)h_k(s,x)\nu^L_f(ds,dx)
    \end{align*}
    for every bounded continuous function $\psi$. Thus, for each $k$,
    \begin{align*}
        \sup_n\iint h_k(s,x)\abs{\brak{Lf^n}(s,x)}\mu(ds,dx)<\infty.
    \end{align*}
Thus, $(f(t\land\tau_k,X_{t\land\tau_k}))_{t\geq 0}$ is a semimartingale for every $k\geq 1$. In other words, $f(\cdot,X.)$ is a semimartingale.\\
Now we proceed to prove $f$ has the generalized gradient in $x$. Without loss of generality, we consider the case where $Z_t:=f(t,X_{t})=Z_0+M_t+A_t$ is the limit of $Z^n_t:=f^n(t,X_{t})=Z^n_0+M^n_t+A^n_t$. Here, $M,M^n$ are local martingales and $A,A^n$ are predictable finite variation process. From Proposition 3 in \cite{chitashvili1997functions}, we know there exists a subsequence of sequence $(f^n)_n$ which we still denote by $(f^n)_n$ and a sequence of stopping times $u_k$ with $u_k\xrightarrow{k\rightarrow\infty}\infty$ such that for every $k$,
\begin{align*}
    \bbE\langle M^n-M \rangle_{u_k}\rightarrow 0\ \text{and}\ \bbE\sup_{s\leq u_k}\abs{A_s^n-A_s}\rightarrow 0.
    \end{align*}
Consider another localizing functions $\Tilde{h}_k(s,x)=\bbE\edg{\indicator{s<u_k}|X_s=x}$ and $C_k=\{(s,x):\Tilde{h}_k(s,x)>\lambda\}$ for some $0<\lambda<1$. According to the proof of Lemma 6 in CM, 
\begin{align*}
    \Tilde{\tau}_k=\inf\{t:\Tilde{h}_k(t,X_t)\leq\lambda\}\wedge u_k=\inf\{t:(t,X_t)\notin C_k\}\wedge u_k,
\end{align*}
are stopping times, and thus 
\begin{align}
\label{general deriviative inequality}
\begin{split}
    \iint_{C_k}&\brak{f^n_x(s,x)-f^m_x(s,x)}^2\mu(ds,dx)\leq \varepsilon^{-1}\iint_{C_k} \brak{f^n_x(s,x)-f^m_x(s,x)}^2 (\sigma\sigma^\intercal)(s,x)\mu(ds,dx)\\
    &\leq \varepsilon^{-1}\bbE\int_0^{\tilde\tau_k}\brak{f^n_x(s,X_s)-f^m_x(s,X_s)}^2 (\sigma\sigma^\intercal)(s,X_s)ds\\
    &\leq \frac{1}{\lambda\varepsilon}\bbE\int_0^{\tilde\tau_k}\tilde h_k(s,X_s)\brak{f^n_x(s,X_s)-f^m_x(s,X_s)}^2 (\sigma\sigma^\intercal)(s,X_s)ds\\
    &\leq\frac{1}{\lambda\varepsilon} \bbE\int_0^{u_k}\brak{f^n_x(s,X_s)-f^m_x(s,X_s)}^2 (\sigma\sigma^\intercal)(s,X_s)ds=\frac{1}{\lambda\varepsilon} \bbE\langle M^n-M^m\rangle_{u_k}\xrightarrow{n,m\rightarrow\infty}0.
\end{split}
\end{align}
Thus, from the completeness of $L^2$ space, there exists $f_x$ such that
\begin{align*}
    \iint_{C_k} \abs{f^n_x(s,x)-f_x(s,x)}^2\mu(ds,dx)\xrightarrow{n\rightarrow\infty}0.
\end{align*}
This indicates $f$ has locally generalized gradient in $x$ w.r.t. the measure $\mu$. Note we can also have 
\begin{align*}
    \bbE\left\langle\int_0^\cdot f^n_x(s,X_s)dX_s-M\right\rangle_{u_k}\xrightarrow{n\rightarrow\infty} 0.
\end{align*}
Since $u_k\rightarrow\infty$, we obtain $M_t=\int_0^t f_x(s,X_s)dX_s$ a.s. for all $t$.\\
\textbf{Step 2.} ($f(\cdot,X.)$: semimartingale $\implies f\in\hat{V}^{\scL}_\mu(loc)$): Let $(f(t,X_t))_t$ be a continuous semimartingale and $Z_t=f(t,X_t)=Z_0+A_t+M_t$. First, let us assume $f(t,X_t)$ is bounded for all $t$, that is $\sup_t\abs{f(t,X_t)}\leq C$ for some constant $C$. Furthermore, let $(\tau_k)_k$ be a nondecreasing sequence of stopping times with $\tau_k\xrightarrow{k}\infty$ such that $\bbE(Var A)_{\tau_k}<\infty$ and $\bbE\langle M,M\rangle_{\tau_k}<\infty$ for every $k\geq 1$. Consider a sequence of functions $(\hat{f}^n)_n$
\begin{align*}
    \hat{f}^n(s,x)=n\int_s^{s+1/n}\int_{\bbR^m}f(u,y)P(s,x,u,dy)du,
\end{align*}
where $P$ stands for transition density function of $X$.
According to Lemma 4 in CM we know that for each $n$, $\hat{f}^n\in V_\mu^L(loc)\subset \hat{V}_\mu^L(loc)$ and
\begin{align*}
    \brak{L\hat{f}^n}(s,x)=n\int_{\bbR^m}\brak{f(s+1/n,y)-f(s,x)}P(s,x,s+1/n,dy).
\end{align*} 
By Theorem 1 in CM, it follows that $\hat{f}^n(t,X_t)$ is a semimartingale such that 
\begin{align*}
    \hat{f}^n(t,X_t)=\hat{f}^n(0,X_0)+\Tilde{A}^n_t+\Tilde{M}^n_t
\end{align*}
where $\Tilde{M}^n_t=\int_0^t\hat{f}^n_x(s,X_s)dX_s$ and
\begin{align*}
    \Tilde{A}^n_t=\int_0^t\brak{L\hat{f}^n}(s,X_s)ds=n\int_0^t \bbE\edg{f(s+1/n,X_{s+1/n})-f(s,X_s)|X_s}ds.
\end{align*}
First, let us show that $f$ has the generalized gradient in $x$ using the same argument in the previous step based on Proposition 3 in \cite{chitashvili1997functions}. Since $\sup_{s\leq t}\abs{\hat{f}^n(s,X_s)-f(s,X_s)}\rightarrow 0$ a.s. for every $t>0$ by Lemma 1 of CM, it is sufficient to show that for each $k$, $\sup_n \bbE\brak{Var\Tilde{A}^n}_{\tau_k}<\infty$ to apply the proposition. Recall that $\sup_t\abs{f(t,X_t)}<C$ and $\bbE\edg{f(t,X_t)-f(s,X_s)|X_s}=\bbE\edg{A_t-A_s|\scF_s^X}$ on $s<t\leq\tau_k$. Now we can deduce that
\begin{align*}
    \brak{Var\Tilde{A}^n}_{\tau_k}=&n\int_0^{\tau_k}\abs{\bbE\edg{f(s+1/n,X_{s+1/n})-f(s,X_s)|X_s}}ds\\
    =&n\int_{\tau_k}^{\tau_k+1/n}\abs{\bbE\edg{f(u,X_u)-f(u-1/n,X_{u-1/n})|X_{u-1/n}}}du\\
    &+n\int_{1/n}^{\tau_k}\abs{\bbE\edg{f(u,X_u)-f(u-1/n,X_{u-1/n})|X_{u-1/n}}}du\\
    \leq&2C+n\int_{1/n}^{\tau_k}\abs{\bbE\edg{A_u-A_{u-1/n}|\scF^X_{u-1/n}}}du.
\end{align*}
Thus we have 
\begin{align}
\label{finite variation}
    \bbE\brak{Var\Tilde{A}^n}_{\tau_k}\leq 2C+n\bbE\int_{1/n}^{\tau_k}\abs{{A_u-A_{u-1/n}}}du\leq 2C+n\bbE\int_{1/n}^{\tau_k}\int_{u-1/n}^u\abs{dA_s}du.
\end{align}
By Fubini's theorem and $(s+1/n)\wedge\tau_k-s\vee 1/n\leq 1/n$, we have
\begin{align*}
    \bbE\int_{1/n}^{\tau_k}\int_{u-1/n}^u\abs{dA_s}du=\bbE\int_0^{\tau_k}\int_{s\vee 1/n}^{(s+1/n)\wedge\tau_k}du\abs{dA_s}\leq 1/n\bbE\int_0^{\tau_k}\abs{dA_s}.
\end{align*}
Thus we have $\bbE\brak{Var\Tilde{A}^n}_{\tau_k}\leq 2C+\bbE\brak{VarA}_{\tau_k}<\infty$. Therefore, there exists a subsequence of $(\hat{f}^n)_n$ and a sequence of stopping times that we still denote by $(\tau_k)_k$ such that for every $k\geq 1$,
\begin{align}
\label{tilde An convergence}
    \bbE\langle\Tilde{M}^n-M\rangle_{\tau_k}\rightarrow 0\ \text{and}\ \bbE\sup_{s\leq \tau_k}\abs{\Tilde{A}^n_s-A_s}\rightarrow 0.
\end{align}
Using the same argument in the previous step, we can conclude $f$ has generalized gradient in $x$ and thus $M_t=\int_0^tf_x(s,X_s)dX_s$. \\
Now we proceed to study the finite variation process $A_t$. Define $\nu_A(ds,dx)$ on Borel sets from $\bbR^+\times\bbR^m$: for a measurable positive function $\psi(s,x)$ with $\bbE\int_0^\infty \psi(s,X_s)\abs{dA_s}<\infty$, let $\nu_A(\psi)=\bbE\int_0^\infty\psi(s,X_s)dA_s$. Note that for the sequence of domains $D_k=\{(s,x):\bbE\edg{\indicator{s<\tau_k}|X_s=x}>\lambda\}$ for some $0<\lambda<1$, we have $\bigcup_kD_k=\bbR^+\times\bbR^m$, and from Lemma 5 in CM, we can deduce that 
\begin{align*}
    \abs{\nu_A(D_k)}\leq\bbE\int_0^\infty\indicator{D_k}(s,X_s)\abs{dA_s}\leq 1/\lambda \bbE\int_0^\infty \bbE\edg{\indicator{s<\tau_k}|X_s}\abs{dA_s}=1/\lambda \bbE\int_0^{\tau_k}\abs{dA_s}<\infty.
\end{align*}
Thus measure $\nu_A$ is $\sigma$-finite on $\scB(\bbR^+\times\bbR^m)$. Recall that for each $n$, $\hat{f}^n\in V_\mu^L(loc)$ and thus 
\begin{align*}
\hat{f}^n(t,X_t)=\hat{f}^n(0,X_0)+   \int_0^t\hat{f}^n_x(s,X_s)dX_s+\int_0^t\brak{L\hat{f}^n}(s,X_s)ds.
\end{align*}
From \eqref{finite variation} and \eqref{tilde An convergence} we have for every $k$,
\begin{align*}
    \bbE\int_0^{\tau_k}\psi(s,X_s)d\Tilde{A}^n_s\rightarrow \bbE\int_0^{\tau_k}\psi(s,X_s)dA_s
\end{align*}
for all bounded continuous function $\psi$. By Lemma 5 in CM and $\Tilde{A}^n_t=\int_0^t\brak{L\hat{f}^n}(s,X_s)ds$, we can write
\begin{align*}
    \bbE\int_0^{\tau_k}\psi(s,X_s)d\Tilde{A}^n_s=\iint\psi(s,x)\brak{L\hat{f}^n}(s,x)h_k(s,x)\mu(ds,dx),
\end{align*}
with $h_k(s,x)=\bbE\edg{\indicator{s<\tau_k}|X_s=x}$, and
\begin{align*}
    J_k^n(\psi):=\iint\psi(s,x)\brak{L\hat{f}^n}(s,x)h_k(s,x)\mu(ds,dx)\xrightarrow{n\rightarrow\infty}\bbE\int_0^{\tau_k}\psi(s,X_s)dA_s=:J_k(\psi)
\end{align*}
Thus, for every $k$, $(J_k^n)_{n=1,2,...}$ is a sequence of linear bounded functionals on bounded continuous functions $C_b$ and it converges for every $\psi\in C_b$. Therefore, its limit $J_k$ is also a linear bounded functional on $C_b$ and it is representable in the form
\begin{align*}
    J_k(\psi)=\iint\psi(s,x)\nu_k(ds,dx),
\end{align*}
where $\nu_k$ is a finite measure on $\scB(\bbR^+\times\bbR^m)$. This implies that 
\begin{align*}
\bbE\int_0^{\tau_k}\psi(s,X_s)dA_s=\iint\psi(s,x)\nu_k(ds,dx).
\end{align*}
From the definition of $\nu_A$ we have $\nu_k(ds,dx)=h_k(s,x)\nu_A(ds,dx)$ and we obtain that for each $k$
\begin{align}
\label{weak L derivative}
    \iint\psi(s,x)\brak{L\hat{f}^n}(s,x)h_k(s,x)\mu(ds,dx)\rightarrow \iint\psi(s,x)h_k(s,x)\nu_A(ds,dx).
\end{align}
Since Lemma 1 in CM implies
\begin{align}
\label{uniform convergence for weak L}
\sup_{s,x\in D}\abs{\hat{f}^n(s,x)-f(s,x)}\rightarrow 0
\end{align}
for every compact $D$ and, for any $n$, $\hat{f}^n\in V^L_\mu(loc)$, it indicates that there exists a sequence of functions from $C^{1,2}$ such that convergences \eqref{weak L derivative} and \eqref{uniform convergence for weak L} hold. Therefore, $f\in\hat{V}^L_\mu(loc)$ and we denote by $\nu^L_f:=\nu_A$ the generalized weak derivative of $f$. Note we can have
\begin{align}
    \label{weak L relation}
    \bbE\int_0^\infty\psi(s,X_s)dA_s=\iint\psi(s,x)\nu^L_f(ds,dx),
\end{align}
and combining $M_t$ we obtained previously, we have equality \eqref{hat V representation}.

\bigskip

\textbf{Step 3.} Lastly, let us show that relation \eqref{weak L relation} uniquely determines process $A$. Let $B$ be another additive process satisfies \eqref{weak L relation}. Then we have
\begin{align*}
    \bbE\int_0^{\tau_k}\psi(s,X_s)dA_s=\bbE\int_0^{\tau_k}\psi(s,X_s)dB_s.
\end{align*}
Notice that if $\psi$ is a bounded function such that $\brak{\psi(t,X_t)}_t$ is right continuous, then we can approximate by bounded continuous functions similar to $\hat f_n$. On the other hand, note that, for each adapted bounded c\`adl\`ag process $Y$, there exists a c\`adl\`ag modification of $E\edg{Y_t|\scF^X_t}$, which is a result from Theorem $6$ in \cite{rao1972modification}. Then Lemma 5 in CM, we can write for any adapted bounded c\`adl\`ag process $Y$
\begin{align*}
    \bbE\int_0^{\tau_k}Y_sdA_s=\bbE\int_0^\infty \bbE\edg{\indicator{s\leq\tau_k}Y_s|X_s}dA_s=\bbE\int_0^\infty \bbE\edg{\indicator{s\leq\tau_k}Y_s|X_s}dB_s= \bbE\int_0^{\tau_k}Y_sdB_s
\end{align*}
which means $A,B$ are indistinguishable.

To remove boundedness of $f$ we can apply a localization technique. For instance, define $\phi^C(x)=x$ if $|x|\leq C$ and $\phi^C(x)=C$ if the others. Then it is easy to see that $\phi^C(f(t,X_t))$ uniformly converges to $f(t,X_t)$ on every compact  a.s. If $(f(t,X_t))_t$ is a semimartingale then  $\brak{\phi^C(f(t,X_t))}_t$ is a bounded semimartingale for every $C$, which means $\phi^C(f)\in\hat{V}^L_\mu(loc)$, and thus $f$ belongs to the same class.
\end{proof}

Since the semimartingale property of $f(t,X_t)$  implies a regularity of $f$, the regularity of $X$ cascades to the regularity of $f(t,X_t)$. More specifically, the Malliavin differentiability of $X$ implies the Malliavin differentiability of $f(t,X_t)$ by the chain rule.
\begin{assumption}
\label{differentiable assumption}
   The coefficients $b,\sigma$ are smooth enough such that the strong/weak solution $X$ of SDE \eqref{ito sde} satisfies $X_t$ is Malliavin differentiable for almost every $t>0$ and $\esssup_{\Omega}\int_0^T|\scD_s X_t|^2 ds<\infty$.
\end{assumption}
\begin{theorem}\label{main theorem}
    Assume that $f(t,X_t)=A_t+M_t$ is a semimartingale with finite variation process $A$ and local martingale $M$ satisfying:
\begin{align}\label{sqint}
	\bbE\edg{( Var A)_T^2+\ang{M}}<\infty
\end{align}
     for any $T$, and that $X$ satisfies Assumption \ref{sde assumption} and \ref{differentiable assumption}.  Then $f(t,X_t)$ is Malliavin differentiable, that is for almost every $t$, $f(t,X_t)\in\bbD^{1,2}$.
\end{theorem}
\begin{Remark}
	Let $X$ be an asset price satisfying Assumption \ref{sde assumption} and Assumption \ref{differentiable assumption}. The previous theorem tells us that, under NFLVR assumption, any bounded financial derivatives on $X$ should be Malliavin differentiable since the Markovian property of $X$ forces the derivative price to be a measurable function of $(t,X_t)$.
\end{Remark}
\begin{Remark}
\cite{pham2013some} tells us that the condition \eqref{sqint} can be replaced by, for $Y_t:=f(t,X_t)$,
$$
\bbE\sup_{t\in[0,T]}|Y_t|^2+\sup_{\Pi}\bbE\edg{\sum_{k=0}^{N-1}\abs{\bbE_{\pi_k}Y_{\pi_{k+1}}- Y_{\pi_k}}^2}
$$
where the supremum is over all stopping time partitions $\Pi:0=\pi_0\leq\pi_1\leq\cdots\leq \pi_N=T$. Moreover, if $A$ is monotone, it can be further simplified to $\bbE\sup_{t\in[0,T]}|Y_t|^2<\infty$.
\end{Remark}
To prove Theorem \ref{main theorem}, we use the following Lemma from \cite{nualart2006malliavin}.
\begin{lemma}[Lemma 1.2.3, \cite{nualart2006malliavin}.]
\label{main lemma}
    Let $(F^n)_{n\geq 1}$ be a sequence of random variables in $\bbD^{1,2}$ that converges to $F$ in $L^2(\Omega)$ and
    \begin{align*}
        \sup_n \bbE\int_0^T|\scD_s F^n|^2ds<\infty.
    \end{align*}
Then $F$ belongs to $\bbD^{1,2}$ and the sequence of derivatives $(\scD F^n)_n$ converges to $\scD F$ in the weak topology of $L^2(\Omega;L^2[0,T])$.
\end{lemma}
\begin{proof}
    Based on the definition of $\hat{V}^L_\mu(loc)$ and Theorem \ref{decomposition theorem}, we know that $f\in \hat{V}^L_\mu(loc)$ and there exists a sequence of $C^{1,2}$ functions $(f^n)_n$ approaches to it. Let $F^n:=f^n(t,X_t)$ where 
    \begin{align*}
    {f}^n(s,x)=n\int_s^{s+1/n}\int_{\bbR^m}f(u,y)P(s,x,u,dy)du,
    \end{align*} 
    as in the proof of Theorem \ref{decomposition theorem}. Then, by the similar argument found in \cite{ladyzenskaya1967linear}, we have
    $\bbE\int_0^T|{f}^n(s,X_s)-f(s,X_s)|^2ds\xrightarrow{n\to\infty}0$, and therefore, 
    $\bbE|F^n-F|^2\xrightarrow{n\to\infty}0$ for almost every $t$, by taking a subsequence if necessary.
    From the chain rule of Malliavin calculus, we can deduce that for each $t$, $F^n$ is Malliavin differentiable and
    \[
    \scD F^n=f^n_x(t,X_t)\scD X_t.
    \]
  Let us fix $T$. Since we can set $\tau_k=k$ in the Proof Step 2 of Theorem \ref{decomposition theorem}, The \eqref{tilde An convergence} and uniform ellipticity of $\sigma(t,x)$  implies $\bbE\int_0^T|f^n_x(s,X_s)|^2ds<\infty$ and $f_x^n(t,X_t)\to f_x(t,X_t)$ in $L^2(\Omega\times[0,T])$. As a result, by taking another subsequence $(f^n)$, we have
    \begin{align*}
        \sup_n \bbE\int_0^t|\scD_s F^n|^2ds&=\sup_n \bbE\int_0^t|f_x^n(t,X_t)\scD_s X_t|^2ds=\esssup_\Omega\int_0^t|\scD_s X_t|^2ds\cdot \sup_n \bbE|f_x^n(t,X_t)|^2<\infty
    \end{align*}
    for almost every $t$.
\end{proof}
\begin{remark}
    If we consider $|b(t,x)|\leq C(1+|x|)$ for some constant $C$, and the SDE is of the form
    \begin{align}
    \label{unbounded drift sde}
    dX_t=b(t,X_t)dt+dW_t;\quad X_0=x\in\bbR^m,
     \end{align}
then the same result holds for Theorem \ref{main theorem}. The reason is we could apply Girsanov theorem to remove the drift and have results in Theorem \ref{decomposition theorem} and semimartingale property is stable under equivalent measure change. Moreover, the constant diffusion term guarantees the Malliavin differentiability of solution $X$.
\end{remark}
\section{Functions transforming a continuous Markov semimartingale into a semimartingale}
In this section we consider a continuous Markov semimartingale $\hat{X}$ defined on the probability space $(\Omega,\scF,\bbP)$, and the filtration $\brak{\scF^{\hat{X}}_t}_t$ is the augmented (strong Markov) filtration generated by $\hat{X}$.

The main result of this section is based on the following observation.
\begin{theorem}
    Let $\hat{X}$ be an $m$-dimensional continuous Markov semimartingale starting at $0$, then for $\hat{X}$, we have 
    \begin{align*}
        \hat{X}_t=A_t+M_t,
    \end{align*}
where $A$ is the finite variation part and $M$ is the local martingale with quadratic variation $N$. For such a process, there exists a system of local characteristics $(V,b,c)$ with $V$ is a strictly increasing continuous process, and $b,c$ are Borel measurable functions such that
\begin{align*}
    A_t=\int_0^tb(\hat{X}_s)dV_s,\quad\text{and}\quad N_t=\int_0^tc^2(\hat{X}_s)dV_s.
\end{align*}
Moreover, under some filtration $(\scG_t)_t$, we have an adapted stochastic process $X$ as a time-changed $\hat{X}$ such that
\begin{align*}
    X_t=\int_0^tb(X_s)ds+\int_0^tc(X_s)dW_s
\end{align*}
is an It\^{o} process. 
\end{theorem}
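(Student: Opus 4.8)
The plan is to proceed in three stages: first obtain the canonical decomposition of $\hat X$ and rewrite both the drift and the quadratic variation as integrals against a single strictly increasing clock; second, invoke the Markov property to show that the resulting densities depend only on the current state; and third, invert the clock by a time change to land on an It\^o process. For the first stage I would use the Bichteler--Dellacherie theorem together with continuity: since $\hat X$ is a continuous semimartingale, its canonical decomposition $\hat X_t = A_t + M_t$ has $A$ continuous of finite variation and $M$ a continuous local martingale, with $N := \ang{M}$ its continuous nondecreasing quadratic variation. Introduce the reference process
\begin{align*}
    V_t := t + \mathrm{Var}(A)_t + \mathrm{tr}(N)_t,
\end{align*}
which is continuous and strictly increasing (the added $t$ guarantees strict monotonicity) and with respect to which both $dA$ and $dN$ are absolutely continuous. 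The Radon--Nikodym theorem then yields predictable densities $\beta$ and $\gamma$ with $dA_t = \beta_t\, dV_t$ and $dN_t = \gamma_t\, dV_t$.

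The heart of the argument, and the step I expect to be the main obstacle, is to replace the predictable densities $\beta_t,\gamma_t$ by functions $b(\hat X_t),\, c^2(\hat X_t)$ of the current state alone. This is precisely the content of the \Ito-type structure theory for Markov semimartingales in the spirit of \cite{cinlar1980semimartingales}: the strong Markov property forces the local characteristics to be \emph{additive functionals} of $\hat X$, and by the representation of continuous additive functionals one can write $A$ and $N$ as integrals of state functions against a common continuous additive functional $V$. Concretely, I would condition increments of $A$ and $N$ over small intervals on $\scF^{\hat X}_s$, use the Markov property to reduce these conditional expectations to functions of $\hat X_s$, and pass to the limit to identify $\beta_t = b(\hat X_t)$ and $\gamma_t = c^2(\hat X_t)$ for Borel measurable $b$ and $c$. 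The delicate points are the measurable selection of the densities as genuine functions of the state rather than merely predictable processes, and verifying that $V$ itself can be taken as a state-independent additive functional serving as a common clock for both characteristics.

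Finally I would time-change by the clock. Let $\tau_t := \inf\crl{s : V_s > t}$ be the continuous strictly increasing inverse of $V$, set $X_t := \hat X_{\tau_t}$ and $\scG_t := \scF^{\hat X}_{\tau_t}$. Since $(\tau_t)_t$ is a family of $(\scF^{\hat X})$-stopping times, $X$ is $(\scG_t)$-adapted, and the time-change theorem for stochastic integrals converts $dV$ into $dt$:
\begin{align*}
    X_t = \int_0^t b(X_s)\, ds + \widetilde M_t, \qquad \ang{\widetilde M}_t = \int_0^t c^2(X_s)\, ds,
\end{align*}
where $\widetilde M$ is a continuous $(\scG_t)$-local martingale. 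To exhibit the diffusion integral, set $W_t := \int_0^t c^{-1}(X_s)\, d\widetilde M_s$ wherever $c\neq 0$, enlarging the space with an independent Brownian motion on the degeneracy set $\crl{c=0}$ if necessary; then $\ang{W}_t = t$, so $W$ is a $(\scG_t)$-Brownian motion by L\'evy's characterization, and $\widetilde M_t = \int_0^t c(X_s)\, dW_s$. This yields the claimed representation $X_t = \int_0^t b(X_s)\, ds + \int_0^t c(X_s)\, dW_s$, so that $X$ is an \Ito\ process.
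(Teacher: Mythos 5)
Your proposal follows essentially the same route as the paper: the same strictly increasing clock $V_t = t + \mathrm{Var}(A)_t + N_t$, the identification of the local characteristics as Borel functions of the state via the structure theory of Markov (additive) semimartingales --- which the paper simply cites from Section 3f of \cite{cinlar1980semimartingales} while you sketch the internals of that citation --- and the identical inverse time change followed by the L\'evy-characterization construction of $W$ with an independent Brownian motion adjoined on the set $\crl{c=0}$. The argument is correct and matches the paper's proof.
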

\begin{proof}
Firstly, one should note that $\hat{X}$ is an additive process since it starts at $0$. Then we define the strictly increasing continuous process as the following
\begin{align*}
    V_t=t+\int_0^t|dA_s|+N_t,
\end{align*}
where $\int_0^t|dA_s|$ stands for the total variation.
Therefore, we have $dA_t\ll dV_t$ and $dN_t\ll dV_t$. Then from section $3f)$, \cite{cinlar1980semimartingales}, we know that there exist Borel measurable functions $b,c$ such that
\begin{align*}
    A_t=\int_0^tb(\hat{X}_s)dV_s,\quad\text{and}\quad N_t=\int_0^tc^2(\hat{X}_s)dV_s.
\end{align*}
Now we can define the random time change 
\begin{align*}
    \hat{V}_u:=\inf\crl{t:V_t>u}.
\end{align*}
We have
\begin{align*}
    \hat{X}_{\hat{V}_t}=:X_t=\int_0^tb(X_s)ds+Y_t,\quad\text{and}\quad\langle Y,Y\rangle_t=\int_0^tc^2(X_s)ds.
\end{align*}
The last equality is obtained from the time change
\begin{align*}
    \langle Y,Y\rangle_t=N_{\hat{V}_t}=\int_0^{\hat{V}_t}c^2(\hat{X}_s)dV_s=\int_0^tc^2(X_s)ds.
\end{align*}
We denote the new filtration as $(\scG_t)_t$ with $\scG_t:=\scF^{\hat{X}}_{\hat{V}_t}$. If $c$ is away from $0$, then we can directly apply Theorem 3.13 in \cite{cinlar1981representation};
if $c$ is possibly $0$, from section 4a of \cite{cinlar1981representation}, we can define 
\begin{align*}
    d\Tilde{W}_t=\frac{1}{c(\hat{X}_t)}\indicator{c(\hat{X}_t)\neq 0}dY_t+\indicator{c(\hat{X}_t)=0}d\Bar{W}_t,
\end{align*}
where $\Bar{W}$ is an independent $\scG_t$-Brownian motion, and then apply that theorem. Therefore, in any sense, there is a $\scG_t$-Brownian motion $W$ such that
\begin{align*}
    X_t=\int_0^tb(X_s)ds+\int_0^tc(X_s)dW_s.
\end{align*}

\end{proof}
\begin{remark}
    For a general continuous Markov semimartingale $X$, we could always construct $Y_t=X_t-X_0$ which is a continuous semimartingale additive functional of $X$.
\end{remark}

\begin{Example}
    We consider a continuous Markov semimartingale $\hat{X}_t=c(t)+t+W_t$ where $c$ is Cantor function and $W$ is Brownian motion. Notice that $V_t=c(t)+t$ is strictly increasing continuous deterministic process, and $dt$ is absolutely continuous with respect to $dV_t$. We have
\begin{align*}
   \langle W,W\rangle_t= t=\int_0^t f^2(\hat{X}_s)dV_s,
\end{align*}
with Lebesgue measure $dt$ is concentrated on the absolutely continuous part (We denote the Cantor set $\scC$.)
\[
f(s) = \frac{dt}{dV}(s) =
\begin{cases}
1, & \text{if } s\notin\scC, \\
0, & \text{if } s \in \scC.
\end{cases}
\]

Now we define the random time change 
\begin{align*}
    \hat{V}_u:=\inf\crl{t:V_t>u}.
\end{align*}
We have
\begin{align*}
    \hat{X}_{\hat{V}_t}=:X_t=t+W_{\hat{V}_t}=t+Y_t,\quad\text{and}\quad\langle Y,Y\rangle_t=\int_0^tf^2(X_s)ds.
\end{align*}
We denote the new filtration as $(\scG_t)_t$.
Moreover, since $f$ is possibly $0$, we have 
\begin{align*}
    d\Tilde{W}_t=\frac{1}{f(X)_t}\indicator{f(X_t)\neq 0}dY_t+\indicator{f(X_t)=0}d\Bar{W}_t,
\end{align*}
where $\Bar{W}$ is an independent $\scG_t$-Brownian motion. In this sense, we have, under new filtration $(\scG_t)_t$,
\begin{align*}
    X_t=t+\int_0^tf(X_s)d\Tilde{W}_s
\end{align*}
which is an It\^{o} process.
\end{Example}

Based on this observation, we have the following as a result.
\begin{theorem}
\label{continuous markov semimtg decomposition theorem}
    Let $\hat{X}$ be an $m$-dimensional continuous Markov semimartingale starting at $x\in\bbR^m$ and satisfy SDE
    \begin{align}
        \label{continuous markov semimtg sde}
        d\hat{X}_t=b(V_t,\hat{X}_t)dV_t+\sigma(V_t,\hat{X}_t)dN_t;\quad \hat{X}_0=x,
    \end{align}
    where $N$ is an adapted local martingale with $\langle N,N \rangle_t=V_t$, and $V$ is an adapted strictly increasing continuous finite variation process. Suppose $b,\sigma$ satisfy Assumption \ref{sde assumption}. We further assume SDE \eqref{ito sde} with coefficients $(b,\sigma)$ has a unique strong solution, for instance, $\sigma$ is Lipschitz. Then $(f(V_t,\hat{X}_t))_{t\geq 0}$ is a semimartingale if and only if $f\in \hat{V}^L_\mu(loc)$, and it has decomposition
\begin{align}
\label{semimtg hat V representation}
    f(V_t,\hat{X}_t)=f(0,\hat{X}_0)+\int_0^t f_x(V_s,\hat{X}_s)d\hat{X}_s+ A_t^f
\end{align}
where $A^f$ is uniquely determined by the relation
\begin{align}
    E\int_0^\infty\psi(V_s,\hat{X}_s)dA_s^f=\iint\psi(s,x)\nu^L_f(ds,dx)
\end{align}
for each bounded continuous function $\psi$.
\end{theorem}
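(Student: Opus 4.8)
The plan is to reduce the statement entirely to the It\^o case already settled in Theorem \ref{decomposition theorem} by means of the random time change built in the preceding theorem. Let $\hat V_u=\inf\crl{t:V_t>u}$ be the inverse of the strictly increasing continuous process $V$, set $\scG_u:=\scF^{\hat X}_{\hat V_u}$, and put $X_u:=\hat X_{\hat V_u}$. Since $V_{\hat V_u}=u$ and $\langle N,N\rangle_t=V_t$, the time-changed local martingale $N_{\hat V_\cdot}$ has quadratic variation $u$ and is therefore a $\scG$-Brownian motion by L\'evy's theorem; consequently $X$ solves \eqref{ito sde} with the same coefficients $(b,\sigma)$, and the unique-strong-solution hypothesis identifies $X$ with the It\^o process to which Theorem \ref{decomposition theorem} applies, so that the transition density defining $\mu$, and hence the classes $V^L_\mu(loc)$ and $\hat V^L_\mu(loc)$, are exactly the ones attached to $X$. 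The bookkeeping identity driving everything is that $V$ and $\hat V$ are mutually inverse continuous time changes, so that with $Y_u:=f(u,X_u)$ one has $f(V_t,\hat X_t)=Y_{V_t}$ and, conversely, $Y_u=\brak{f(V_\cdot,\hat X_\cdot)}_{\hat V_u}$, using $X_{V_s}=\hat X_s$.

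First I would invoke Theorem \ref{decomposition theorem} for the It\^o process $X$: on the $u$-clock, $Y_u=f(u,X_u)$ is a semimartingale if and only if $f\in\hat V^L_\mu(loc)$, in which case $Y_u=f(0,X_0)+\int_0^u f_x(s,X_s)\,dX_s+A^{f,X}_u$ with $A^{f,X}$ determined by $\nu^L_f$. Next I would transport this along the time change. Because $\hat V_u$ are $\scF^{\hat X}$-stopping times and $V_t$ are $\scG$-stopping times (with $\scG_{V_t}=\scF^{\hat X}_t$), the standard time-change theorem for semimartingales applies in both directions and yields the two-way implication that $Y$ is a $\scG$-semimartingale if and only if $Y_{V_\cdot}=f(V_\cdot,\hat X_\cdot)$ is an $\scF^{\hat X}$-semimartingale; combined with the previous sentence this gives the asserted equivalence with membership in $\hat V^L_\mu(loc)$. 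To obtain the explicit decomposition \eqref{semimtg hat V representation} I would apply the time-change rule for stochastic integrals, $\brak{\int_0^\cdot f_x(s,X_s)\,dX_s}_{V_t}=\int_0^t f_x(V_s,\hat X_s)\,d\hat X_s$ (again via $X_{V_s}=\hat X_s$), and set $A^f_t:=A^{f,X}_{V_t}$, which is once more a continuous finite-variation process.

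It remains to match the measure characterization of $A^f$, and then to flag the genuine difficulty. Here I would carry out the deterministic change of variables $u=V_s$ (equivalently $s=\hat V_u$) in the pathwise Stieltjes integral: since $A^f_s=A^{f,X}_{V_s}$, $V_{\hat V_u}=u$ and $\hat X_{\hat V_u}=X_u$, one gets $\int_0^\infty\psi(V_s,\hat X_s)\,dA^f_s=\int_0^\infty\psi(u,X_u)\,dA^{f,X}_u$ pathwise, and taking expectations together with the characterization from Theorem \ref{decomposition theorem} gives $\bbE\int_0^\infty\psi(V_s,\hat X_s)\,dA^f_s=\iint\psi(s,x)\,\nu^L_f(ds,dx)$ for every bounded continuous $\psi$; uniqueness of $A^f$ is then inherited from the uniqueness already proved on the $u$-clock. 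The main obstacle I anticipate is not any single computation but the careful filtration bookkeeping underlying the two-way time-change argument: one must verify that $V_t$ are $\scG$-stopping times and $\hat V_u$ are $\scF^{\hat X}$-stopping times so that the semimartingale-preservation and integral-transformation theorems apply in both directions, and one must ensure that the time-changed $X$ genuinely carries the transition density $\mu$ defining $\hat V^L_\mu(loc)$ --- which is precisely where the unique-strong-solution assumption and the ellipticity in Assumption \ref{sde assumption} enter. Once these identifications are secured, every remaining step is a transcription of Theorem \ref{decomposition theorem} through the time change.
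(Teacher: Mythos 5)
Your proposal is correct and follows essentially the same route as the paper: both reduce the statement to Theorem \ref{decomposition theorem} via the inverse time change $\hat V_u=\inf\{t:V_t>u\}$, using the duality of \cite{kobayashi2011stochastic} to transfer the semimartingale property and the stochastic-integral/Stieltjes-integral identities across the two clocks, with the filtration and localization bookkeeping (the paper phrases this as the identity $\hat h_k(V_t,\hat X_t)=h_k(t,X_t)$ and the coincidence of the stopping times $\tau_k=\hat\tau_k$) being the only genuinely new content beyond the It\^o case.
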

To obtain such result, we borrow arguments introduced by \cite{jacod2006calcul}, and further completed by \cite{kobayashi2011stochastic} on time-changed semimartingales. Since both $V$ and $\hat{V}$ have strictly increasing continuous paths, they satisfy the ``double bracket [[\:]]'' relation in \cite{kobayashi2011stochastic} and the connection between $X$ and $\hat{X}$ can be shown by the following  duality theorem of SDEs.
\begin{theorem}[Theorem $4.2$, \cite{kobayashi2011stochastic}.]
\label{duality theorem}
Let $\hat{X}$ be the process introduced in Theorem \ref{continuous markov semimtg decomposition theorem} and $X$ be its corresponding time-chenged It\^{o} process. We have if $X$ satisfies \eqref{ito sde}, then $\hat{X}$ with $\hat{X}_t=X_{V_t}$ satisfies \eqref{continuous markov semimtg sde}; if $\hat{X}$ satisfies \eqref{continuous markov semimtg sde}, then $X$ with $X_t=\hat{X}_{\hat{V}_t}$ satisfies \eqref{ito sde}.
\end{theorem}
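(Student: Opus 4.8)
The plan is to exploit the fact that the statement is a specialization of Kobayashi's Theorem 4.2 to the case of continuous, strictly increasing clocks, so the cleanest route is to verify that Kobayashi's hypothesis holds here and then invoke his result. The only substantive verification is that the two time changes $V$ and $\hat V$ are continuous and strictly increasing, which is exactly the ``double bracket $[[\,]]$'' condition under which time change commutes with stochastic integration without correction terms. Below I outline the self-contained time-change argument, which is symmetric in the two directions and is essentially Kobayashi's. First I would record the deterministic structure: since $V$ is continuous and strictly increasing it is a bijection of $[0,\infty)$ onto its range with continuous strictly increasing inverse $\hat V$, so $V_{\hat V_u}=u$ and $\hat V_{V_t}=t$; consequently $\hat X_t=X_{V_t}$ and $X_t=\hat X_{\hat V_t}$ are inverse relations and $dV_{\hat V_r}=dr$. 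I would keep track of the two filtrations interchanged by these clocks: $\scF^{\hat X}$ carrying $\hat X$ and $N$, and the time-changed filtration $\scG_t=\scF^{\hat X}_{\hat V_t}$ carrying $X$ and $W$.

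For the forward implication ($X\Rightarrow\hat X$) I would start from $X_t=x+\int_0^t b(s,X_s)\,ds+\int_0^t\sigma(s,X_s)\,dW_s$ and evaluate at $t\mapsto V_t$. The finite-variation term is handled by an ordinary change of variables $s=V_r$, giving $\int_0^{V_t}b(s,X_s)\,ds=\int_0^t b(V_r,\hat X_r)\,dV_r$. For the martingale term, set $N_r:=W_{V_r}$, so that $\langle N\rangle_r=\langle W\rangle_{V_r}=V_r$, and apply the time-change formula for stochastic integrals to obtain $\int_0^{V_t}\sigma(s,X_s)\,dW_s=\int_0^t\sigma(V_r,\hat X_r)\,dN_r$. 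Adding the two pieces reproduces \eqref{continuous markov semimtg sde} with the prescribed quadratic variation $\langle N\rangle_t=V_t$.

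The reverse implication ($\hat X\Rightarrow X$) is the mirror image. I would evaluate $\hat X$ at $t\mapsto\hat V_t$, use $dV_{\hat V_r}=dr$ together with $V_{\hat V_r}=r$ and $\hat X_{\hat V_r}=X_r$ to turn $\int_0^{\hat V_t}b(V_s,\hat X_s)\,dV_s$ into $\int_0^t b(r,X_r)\,dr$, and set $W_r:=N_{\hat V_r}$, which satisfies $\langle W\rangle_r=\langle N\rangle_{\hat V_r}=V_{\hat V_r}=r$ and is therefore a Brownian motion by L\'evy's characterization. The same time-change formula then gives $\int_0^{\hat V_t}\sigma(V_s,\hat X_s)\,dN_s=\int_0^t\sigma(r,X_r)\,dW_r$, so $X$ solves \eqref{ito sde}; the assumed strong uniqueness for \eqref{ito sde} upgrades this to a genuine bijection between the two families of solutions.

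The main obstacle is the stochastic-integral time-change identity, not the drift change of variables. One must check that $N_r=W_{V_r}$ (respectively $W_r=N_{\hat V_r}$) is a genuine local martingale with respect to the appropriate time-changed filtration, that the composed integrand $\sigma(V_r,\hat X_r)$ is predictable there, and that integration commutes with the time change with no extra term. This last point is precisely where the continuity of the clocks is used: for time changes with jumps the commutation acquires correction terms, whereas when $V$ and hence $\hat V$ are continuous and strictly increasing—the double-bracket condition—these terms vanish and the identity holds as stated. I would therefore devote most of the argument to confirming these measurability and local-martingale properties, after which both directions follow by linearity of the stochastic integral.
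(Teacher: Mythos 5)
Your proposal is correct and follows the same route as the paper, which offers no proof of this statement at all: it simply imports Kobayashi's Theorem 4.2 after observing (in the surrounding text) that the continuity and strict monotonicity of $V$ and $\hat V$ give the ``double bracket'' condition under which time change commutes with stochastic integration without correction terms. Your additional self-contained sketch --- pathwise inversion of the clock, change of variables in the drift, $N_r=W_{V_r}$ and $W_r=N_{\hat V_r}$ with L\'evy's characterization, and the time-change formula for the martingale integrals --- is the standard argument behind that theorem and is sound, with the measurability/local-martingale checks you flag being exactly the content of Kobayashi's synchronization hypothesis.
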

Moreover, \cite{kobayashi2011stochastic} showed that, for any $t\geq 0$, and any well-defined stochastic integral driven by a  semimartingale $Z$, with probability one, we can also have
\begin{align*}
    \int_0^{V_t}H_sdZ_s=\int_0^tH_{V_s}dZ_{V_s}\quad\text{and}\quad\int_0^t H_sdZ_{V_s}=\int_0^{V_t}H_{\hat{V}_s}dZ_s.
\end{align*}

Based on Definition \ref{def V}, we consider the sequence $(h_k)_k$ is given by the following.
\begin{remark}
\label{localizing remark}
    We consider a sequence of stopping times $(\hat{u}_k)_k$ with $\hat{u}_k\rightarrow\infty$, and define a localizing function $\hat{h}_k(s,x)=\bbE\edg{\indicator{s<\hat{u}_k}|\hat{X}_s=x}$. Then we can have for each $k>0$, 
    \begin{align*}
\hat{h}_k(V_t,\hat{X}_t)=\bbE\edg{\indicator{V_t<\hat{u}_k}|\hat{X}_t}=\bbE\edg{\indicator{t<u_k}|X_t}=:h_k(t,X_t),
    \end{align*}
where $u_k:=\inf\crl{t\geq 0:V_t>\hat{u}_k}$. For some $0<\lambda<1$, if we define $C_k=\crl{(s,x):h_k(s,x)>\lambda}$, note that $\brak{\hat{h}_k(V_t,\hat{X}_t)}_t$ and $\brak{h_k(t,X_t)}_t$ are continuous submartingales, and we have
\begin{align*}
    \tau_k:=\inf\crl{t:(t,X_t)\notin C_k}=\inf\crl{t:h_k(t,X_t)\leq\lambda}=\inf\crl{t:\hat{h}_k(V_t,\hat{X}_t)\leq\lambda}=\inf\crl{t:(V_t,\hat{X}_t)\notin C_k}=:\hat{\tau}_k.
\end{align*}
\end{remark}
Now we proceed to prove Theorem \ref{continuous markov semimtg decomposition theorem}.
\begin{proof}
Above observation tells us that the time change transforms the theorem to Theorem \ref{decomposition theorem} and vice versa.
\end{proof}
\begin{remark}
    The same result should hold if we consider the $\hat{X}$ satisfies the SDE
    \begin{align*}
d\hat{X}_t=b(V_t,\hat{X}_t)dV_t+dN_t;\quad \hat{X}_0=x,
    \end{align*}
    where $b$ is allowed to be unbounded and $|b(t,x)|\leq C(1+|x|)$. The reason is equation \eqref{unbounded drift sde} has a unique strong solution by \cite{menoukeu2019flows}. 
\end{remark}
\section{Acknowledgement of AI use}
AI has been used to polish English expressions in abstract.
\bibliographystyle{apalike}
\bibliography{ref.bib}

\end{document}